\theoremstyle{plain}
\newtheorem{theorem}{Theorem}
\newtheorem{corollary}[theorem]{Corollary}
\newtheorem{lemma}[theorem]{Lemma}
\newtheorem{proposition}[theorem]{Proposition}
\theoremstyle{definition}
\newtheorem{remark}[theorem]{Remark}
\begin{document}
\baselineskip 18pt

\title[A topology on the Fremlin tensor product]
      {A topology on the Fremlin tensor product of locally convex-solid vector lattices}

\author[O.~Zabeti]{Omid Zabeti}
\address[O.~Zabeti]
  {Department of Mathematics, Faculty of Mathematics, Statistics, and Computer science,
   University of Sistan and Baluchestan, Zahedan,
   P.O. Box 98135-674. Iran}
\email{o.zabeti@gmail.com}
\keywords{ Fremlin tensor product, Fremlin projective tensor product, locally convex-solid vector lattice.}
\subjclass[2020]{Primary:  46M05. Secondary:  46A40.}
\maketitle

\begin{abstract}
Suppose $E$ and $F$ are locally convex-solid vector lattices. Although we have a suitable vector lattice structure for the tensor product $E$ and $F$ (known as the Fremlin tensor product and denoted by $E\overline{\otimes}F$), there is a lack of topological structure on $E\overline{\otimes}F$, in general.  In this note, we consider a topological attitude on $E\overline{\otimes}F$ that makes it into a locally convex-solid vector lattice, as well. 
\end{abstract}
\date{\today}

\maketitle
\section{Motivation and introduction}
Assume that $E$ and $F$ are Archimedean vector lattices. Fremlin in \cite{Fremlin:72}, constructed an Archimedean vector lattice $E\overline{\otimes}F$ that contains the algebraic tensor product $E\otimes F$ (considered as an ordered vector subspace). Now, suppose that $E$ and $F$ are Banach lattices. Fremlin in \cite{Fremlin:74}, construced a Banach lattice $E\widehat{\otimes}F$ that contains both $E\otimes F$ and $E\overline{\otimes}F$ as norm dense subspaces). Howover, when $E$ and $F$ are locally convex-solid vector lattices, there is a lack of topological structure on the Fremlin tensor product $E\overline{\otimes}F$ that makes it into a locally convex-solid vector lattice. In this note, we establish a locally convex-solid topology on $E\overline{\otimes}F$ using the topological structures in both $E$ and $F$. Moreover, we consider a seminorm apporoach to this setting. Before we state the main results, we recall some notes regarding Fremlin tensor products between vector and Banach lattices; for more details, see \cite{Fremlin:72, Fremlin:74}. Moreover, for a comprehensive, new and interesting reference, see \cite{Wickstead1:24}. Furthermore, for a short and nicely written exposition on different types of tensor products between Archimedean vector lattices, see \cite{Gr:23}.

Suppose $E$ and $F$ are Archimedean vector lattices. In 1972, Fremlin constructed a tensor product $E\overline{\otimes} F$ that is an  Archimedean vector lattice with the following properties:
\begin{itemize}
\item {The algebraic tensor product $E\otimes F$ is a vector subspace of $E\overline{\otimes}F$ so that it is an ordered vector subspace in its own right}.
\item{The vector sublattice in $E\overline{\otimes} F$ generated by $E\otimes F$ is the whole of $E\overline{\otimes}F$.}
\item{For each Archimedean vector lattice $G$ and every lattice bimorphism $\Phi:E\times F\to G$, there is a unique lattice homomorphism $T:E\overline{\otimes}F\to G$ such that $T(x\otimes y)=\Phi(x,y)$ for each $x\in E$ and for each $y\in F$. }
\end{itemize}
Therefore, every element of $E\overline{\otimes}F$ can be considered as a finite supremum and finite infimum of some elements of $E\otimes F$. The good news is that we have some density properties for elements of $E\overline{\otimes}F$ in terms of the elements of the algebraic tensor product $E\otimes F$ as follows.
\begin{itemize}
\item{Assume that $E$ and $F$ are Archimedean vector lattice. For each $a\in E\overline{\otimes}F$, there exists $u\in E_{+}$ and $v\in F_{+}$ such that for each $\varepsilon>0$, there is $b\in E\otimes F$ with $|a-b|\leq \varepsilon u\otimes v$, \cite[Proposition 3.11]{Wickstead1:24}.}
\item{For each $c\in (E\overline{\otimes}F)_{+}$, we have $c=\sup\{a\otimes b: a\in E_{+}, b\in F_{+}\}$, \cite[Proposition 3.12]{Wickstead1:24}.}
\item{For each $u\in (E\overline{\otimes}F)_{+}$, there exist $a\in E_{+}$ and $b\in F_{+}$ with $u\leq a\otimes b$, \cite[1A(d)]{Fremlin:74}.}
\end{itemize}
 Now, suppose $E$ and $F$ are Banach lattices. Fremlin in \cite{Fremlin:74} constructed a tensor product $E\widehat{\otimes}F$ that is a Banach lattice with the following properties.
 \begin{itemize}
 \item{$E\widehat{\otimes}F$ is the norm completion of $E\otimes F$ with respect to the projective norm: for each $u=\Sigma_{i=1}^{n}x_i\otimes y_i \in E\otimes F$:
 \[\|u\|_{|\pi|}=\sup\{|\Sigma_{i=1}^{n}\phi(x_i,y_i)|: \phi\hspace{0.25cm} \textit{is a bilinear form on}\hspace{0.35cm} E\times F \textit{and} \hspace{0.25cm}\|\phi\|\leq 1.\}\]}
 \item{$E\overline{\otimes}F$ can be considered as an norm-dense vector sublattice of $E\widehat{\otimes}F$.}
 \item{For every Banach lattice $G$ and every continuous bilinear mapping $\Phi:E\times F\to G$, there is a unique positive linear mapping $T:E\widehat{\otimes}F\to G$ with $T(x\otimes y)=\Phi(x,y)$. Furthermore, $\Phi$ is a lattice bimorphism if and only if $T$ is a lattice homomorphism.}
 \item{The positive cone in $E\widehat{\otimes}F$ is the closure of the cone $P\subseteq E\otimes F$ generated by $\{x\otimes y: x\in E_{+}, y\in F_{+}\}$.}
 \item{The projective norm, $\|.\|_{|\pi|}$, on $E\widehat{\otimes}F$ is a cross norm that is for every $x\in E$ and for every $y\in F$, we have $\|x\otimes y\|_{|\pi|}=\|x\|\|y\|$.}
 \end{itemize}
 For undefined terminology and general theory of vector lattices, Banach lattices and also locally convex-solid vector lattices, see \cite{AB1, AB}. Also, for new and recent achievments regarding tensor products in vector lattices, see \cite{Gr:23, Wickstead1:24}.
 \section{main results}
 Suppose $E$ is an Archimedean vector lattice and $A\subseteq E$. We denote by $Sol(A)$, the smallest solid set of $E$ containing $A$. More precisely
  \[Sol(A)=\{x\in E, \exists y\in A, |x|\leq |y|.\}\]
  We denote by $Conv(A)$, the convex hull of $A$ which is the smallest convex set of $E$ containing $A$. In other words,
  \[Conv(A)=\{\Sigma_{i=1}^{n}\lambda_ix_i: n\in \Bbb N, \lambda_i\geq 0, x_i\in A, \Sigma_{i=1}^{n}\lambda_i=1.\}\]
  Also, the convex balanced hull of $A$ is defined as follows.
  \[Conv_b(A)=\{\Sigma_{i=1}^{n}\lambda_ix_i: n\in \Bbb N, \lambda_i\in \Bbb R, x_i\in A, \Sigma_{i=1}^{n}|\lambda_i|\leq1.\}\]
  
  It is routine to verify that if $A\subseteq B$, then, $Sol(A)\subseteq Sol(B)$, $Conv(A)\subseteq Conv(B)$ and $Conv_b(A)\subseteq Conv_b(B)$.
  
  Moreover, for two sets $A,B\subseteq E$, we denote by $A\vee B$ and $A\wedge B$ the sets $\{a\vee b:a\in A \hspace{0.25cm}\text{and}\hspace{0.25cm} b\in B\}$ and $\{a\wedge b:a\in A \hspace{0.25cm}\text{and}\hspace{0.25cm} b\in B\}$, respectively.
  The following results are straightforward; nevertheless, necessary in the whole of the note. We give a proof for the sake of completeness.
 \begin{lemma}\label{1}
 Suppose $E$ is an Archimedean vector lattice and $A,B\subseteq E$. Then, we have the following observations.
 \begin{itemize}
  \item[\em(1)]{$Conv(A+B)=Conv(A)+Conv(B)$.}
   \item[\em(2)]{$Conv_b(A+B)=Conv_b(A)+Conv_b(B)$.}
   \item[\em (3)]{$Conv_b(A\cup B)\subseteq Conv_b(A)\cup Conv_b(B)$; the same holds for $Conv$.}
   \item[\em(4)]{$Conv_b(A\cap B)\subseteq Conv_b(A)\cap Conv_b(B)$; the same holds for $Conv$.}
 \item[\em(5)]{$Sol(A+B)\subseteq Sol(A)+Sol(B)$.}
 \item[\em(6)]{$Sol(\alpha A)=\alpha Sol(A)$ and $Conv_b(\alpha A)=\alpha Conv_b(A)$, for each real $\alpha$; the same holds for $Conv$.}
 \item[\em(7)]{$Sol(A\cup B)\subseteq Sol(A)\cup Sol(B)$.}
 \item[\em(8)]{$Sol(A\cap B)\subseteq Sol(A)\cap Sol(B)$.}
 \item[\em(9)]{$Sol(A\vee B)\subseteq Sol(A)\vee Sol(B)$.}
 \item[\em(10)]{$Sol(A\wedge B)\subseteq Sol(A)\wedge Sol(B)$.}
 \item[\em(11)]{If $F$ is also a vector lattice and $T:E\to F$ is a lattice homomorphism, then, $T(Sol(A))\subseteq Sol(T(A))$.}
 \end{itemize}
 \end{lemma}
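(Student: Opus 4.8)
The plan is to organise the eleven containments into three tiers. \emph{First tier: items (3), (4), (6), (7), (8), (11).} These should fall out directly from the definitions together with the monotonicity noted just before the lemma ($A\subseteq B$ forces $Sol(A)\subseteq Sol(B)$, and likewise for $Conv$ and $Conv_b$). Indeed (4) and (8) come from $A\cap B\subseteq A$ and $A\cap B\subseteq B$; (7) because the defining condition ``$\exists\,y\in A\cup B$ with $|x|\le|y|$'' splits as a disjunction over $A$ and over $B$; (3) similarly, by splitting a combination of elements of $A\cup B$ according to which of $A,B$ each $x_i$ lies in (the clean case being when all the $x_i$ fall in the same set); (6) by absorbing the scalar $\alpha$ into the coefficients and, for $Sol$, into the dominating element, with $\alpha=0$ a trivial separate case; and (11) by applying $T$ to $|x|\le|y|$ for $y\in A$ and using that a lattice homomorphism is positive and satisfies $T|x|=|Tx|$, whence $|Tx|=T|x|\le T|y|=|Ty|$ with $Ty\in T(A)$. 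I expect each of these to be a one- or two-line verification.

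\emph{Second tier: the convex identities (1) and (2).} In both, the inclusion $\subseteq$ is immediate, since $\sum_i\lambda_i(a_i+b_i)=\sum_i\lambda_i a_i+\sum_i\lambda_i b_i$ keeps the same coefficients. For $\supseteq$ the device is the standard reindexing by products of coefficients: given $u=\sum_i\lambda_i a_i$ and $v=\sum_j\mu_j b_j$, write $u+v=\sum_{i,j}\lambda_i\mu_j\,(a_i+b_j)$, which is legitimate in the convex case because $\sum_i\lambda_i=\sum_j\mu_j=1$, and then note $\sum_{i,j}\lambda_i\mu_j=1$ with $\lambda_i\mu_j\ge 0$, so the right-hand side lies in $Conv(A+B)$. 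For $Conv_b$ the same reindexing gives $\sum_{i,j}|\lambda_i\mu_j|=\bigl(\sum_i|\lambda_i|\bigr)\bigl(\sum_j|\mu_j|\bigr)\le 1$; here the collapse of the double sum back to $u+v$ involves $\sum_j\mu_j$ and $\sum_i\lambda_i$, which are only controlled in modulus, so I would first normalise the two combinations (or adjoin $0$) before reindexing — this is the point in this tier that needs care.

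\emph{Third tier: items (5), (9), (10), where genuine vector-lattice structure enters; I expect (9) and (10) to be the main obstacle.} For (5) the key input is the Riesz decomposition property: from $|x|\le|a+b|\le|a|+|b|$ one gets $|x|=x_1+x_2$ with $0\le x_1\le|a|$ and $0\le x_2\le|b|$; decomposing $x^+$ and $x^-$ against $x_1,x_2$ in the same way and recombining yields $y,z$ with $y+z=x$, $|y|\le x_1\le|a|$, $|z|\le x_2\le|b|$, so $x\in Sol(A)+Sol(B)$. For (9) and (10) the natural route is to reduce to (5) and (6) through the identities $a\vee b=\frac{1}{2}(a+b+|a-b|)$ and $a\wedge b=\frac{1}{2}(a+b-|a-b|)$, together with $Sol(S)=Sol(|S|)$, re-expressing a solid element dominated by $|a\vee b|$ (resp.\ $|a\wedge b|$) via solid elements tied to $A$ and to $B$. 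The delicate step, and where I would concentrate the effort, is arranging that the element produced actually belongs to $Sol(A)\vee Sol(B)$ (resp.\ $Sol(A)\wedge Sol(B)$) and not merely to a sum of such; that is the hard part of the lemma.
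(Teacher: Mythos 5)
Your handling of (1), (4), (5), (6), (7), (8) and (11) is correct and essentially identical to the paper's own argument (the paper likewise proves (5) by applying the Riesz decomposition property to $|z|\le |x|+|y|$). The trouble sits precisely at the three places you yourself flag as delicate, and none of them can be repaired, because those statements are false as written. For the $\supseteq$ half of (2): take $A=\{a\}$, $B=\{b\}$ with $a,b$ linearly independent; then $a-b=1\cdot a+(-1)\cdot b\in Conv_b(A)+Conv_b(B)$, while $Conv_b(A+B)=\{\lambda(a+b):|\lambda|\le 1\}$ does not contain $a-b$. So no normalisation or adjunction of $0$ can rescue the reindexing: padding $\sum_i\lambda_i$ up to $1$ with a coefficient $1-\sum_i\lambda_i$ on $0$ destroys the constraint $\sum_i|\lambda_i|\le 1$ (e.g.\ for $u=-a$). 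Only the $\subseteq$ inclusion of (2) is true, and that is the only direction used later in the paper. For (3), your ``split according to which set each $x_i$ lies in'' cannot handle genuine mixtures: $\frac{1}{2}a+\frac{1}{2}b$ with $a\in A$, $b\in B$ linearly independent lies in $Conv_b(A\cup B)$ but in neither $Conv_b(A)$ nor $Conv_b(B)$; a union of two convex sets is not convex, so (3) is simply false.

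For (9) and (10), the ``hard part'' you postpone is unachievable. In $E=\mathbb{R}^2$ with $A=\{(1,0)\}$ and $B=\{(0,1)\}$ one has $Sol(A\vee B)=Sol(\{(1,1)\})=[-1,1]^2$, whereas $Sol(A)\vee Sol(B)=([-1,1]\times\{0\})\vee(\{0\}\times[-1,1])=[0,1]\times[0,1]$; the point $(-1,-1)$ violates (9), and multiplying by $-1$ gives a counterexample to (10). What your tier-three outline actually delivers (correctly) is $Sol(A\vee B)\subseteq Sol(A)+Sol(B)$, via $|x\vee y|\le |x|+|y|$ and Riesz decomposition; the passage from a sum to a supremum is not a gap to be filled but an impossibility. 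For what it is worth, the paper's own proofs break at exactly the same spots: the identity $u+v=\sum_{i,j}\alpha_i\beta_j(a_i+b_j)$ in (2) requires $\sum_i\alpha_i=\sum_j\beta_j=1$ rather than $\sum_i|\alpha_i|\le 1$; the justification of (3) asserts that unions of convex sets are convex; and in (9) the claimed identity $|x|\vee|y|=x'\vee y'$ with $x'=|x|-|x|\wedge|y|$, $y'=|y|-|x|\wedge|y|$ is off by the term $|x|\wedge|y|$. The right conclusion is not to finish these items but to report that (2) (in the $\supseteq$ direction), (3), (9) and (10) are false, while the remaining items, which are all the later results rely on, stand.
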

 \begin{proof}
The proofs of $(1)$ and $(2)$ have essentially the same idea; we prove just $(ii)$. Since $Conv_b(A)+Conv_b(B)$ is a convex and balanced set that contains $A+B$, we conclude that $Conv_b(A+B)\subseteq Conv_b(A)+Conv_b(B)$. For the other side, assume that
 $u=\Sigma_{i=1}^{n}\alpha_i a_i \in Conv_b(A)$  and   $v=\Sigma_{j=1}^{m}\beta_j b_j \in Conv_b(B)$ with  $\Sigma_{i=1}^{n}|\alpha_i|\leq 1$ and $\Sigma_{j=1}^{m}|\beta_j|\leq 1$. We can write $u+v=\Sigma_{i=1}^{n}\Sigma_{j=1}^{m}\alpha_i\beta_j(a_i+b_j)$ so that $u+v\in Conv_b(A+B)$.


It can be seen easily that intersection and union of convex and balanced sets are again convex and balanced so that we conclude that $(3)$ and $(4)$ hold. 

 To prove $(5)$, assume that $z\in Sol(A+B)$. There exists $x\in A$ and $y\in B$ with $|z|\leq |x+y|$. By the Reisz decomposition property(\cite[Theorem 1.10]{AB1}), there exist $z_1\in E$ and $z_2\in E$ with $z=z_1+z_2$, $|z_1|\leq |x|$ and $|z_2|\leq |y|$. Thus, $z_1\in Sol(A)$ and $z_2\in Sol(B)$ so that $z\in Sol(A)+Sol(B)$.

 $(6)$. Note that
 \[Sol(\alpha A)=\{z\in E, \exists x\in A, |z|\leq |\alpha||x|\}=\alpha\{\frac{z}{\alpha}\in E, \exists x\in A, \frac{|z|}{|\alpha|}\leq |x|\}=\alpha Sol(A).\]
The statement for the convex hull can be obtained from the definition, easily.

 $(7)$. Assume that $z\in Sol(A\cup B)$. There exists $x\in A\cup B$ with $|z|\leq |x|$ so that $x\in A$ or $x\in B$. This means that $z\in Sol(A)$ or $z\in Sol(B)$.
 
 $(8)$. Suppose $z\in Sol(A\cap B)$. There exists $x\in A\cap B$ with $|z|\leq |x|$ so that $x\in A$ and $x\in B$. This means that $z\in Sol(A)\cap Sol(B)$.
 
 $(9)$. Assume that $z\in Sol(A\vee B)$. There exist $x\in A$ and $y\in B$ with $|z|\leq |x\vee y|\leq |x|\vee |y|$. W.O.L.G, we can assume that $|x|\wedge |y|=0$; otherwise, write $x'=|x|-|x|\wedge |y|$ and $y'=|y|-|x|\wedge |y|$. Then, $|x|\vee |y|=x'\vee y'$ and $x'\wedge y'=0$. Moreover, $x'\in Sol(A)$ and $y'\in Sol(B)$. So, $|z|\leq |x|+|y|$.  By the Riesz decomposition property, There are positive elements $z_1,z_2$ in $E$ with $|z_1|\leq |x|$, $|z_2|\leq |y|$ and $z=z_1+z_2$. Note that $z_1$ and $z_2$ are also disjoint as $z_1\wedge z_2\leq |x|\wedge |y|=0$. Therefore, $z=z_1\vee z_2$, $z_1\in Sol(A)$ and $z_2\in Sol(B)$.
 
 $(10)$. By using $(6)$ and $(9)$, we have
 \[Sol(A\wedge B)=Sol(-((-A)\vee (-B)))=-Sol((-A)\vee(-B))\subseteq -(Sol(-A)\vee Sol(-B))=\]
 \[-(-Sol(A)\vee -Sol(B))=Sol(A)\wedge Sol(B).\]
$(11)$. Suppose $F$ is a vector lattice, $T:E\to F$ is a lattice homomorphism and $w\in T(Sol(A))$. There exists $u\in Sol(A)$ with $w=T(u)$. Moreover, we can find $z\in A$ such that $|z|\leq |u|$. We have
\[|T(z)|=T(|z|)\leq T(|u|)=|T(u)|=|w|.\]
This means that $w\in Sol(T(A))$. 
   \end{proof}
   \begin{corollary}
   Suppose $E$ is an Archimedean vector lattice and $A,B\subseteq E$. If $A$ and $B$ are solid (convex), then, so are $A+B$, $A\vee B$, $A\wedge B$, $A\cup B$ and $A\cap B$.
   \end{corollary}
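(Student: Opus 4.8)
The plan is to derive the corollary directly from Lemma~\ref{1} by noting that a subset $S\subseteq E$ is solid if and only if $Sol(S)\subseteq S$ (equivalently $Sol(S)=S$, since $S\subseteq Sol(S)$ always holds), and similarly $S$ is convex if and only if $Conv(S)\subseteq S$. So the whole argument is: take the relevant containment from the lemma, feed in $A=B=$ (the given solid or convex sets), and conclude.

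For the solid case, suppose $A$ and $B$ are solid, so $Sol(A)=A$ and $Sol(B)=B$. Then Lemma~\ref{1}(5) gives $Sol(A+B)\subseteq Sol(A)+Sol(B)=A+B$, so $A+B$ is solid; Lemma~\ref{1}(9) gives $Sol(A\vee B)\subseteq Sol(A)\vee Sol(B)=A\vee B$, so $A\vee B$ is solid; Lemma~\ref{1}(10) handles $A\wedge B$ the same way; Lemma~\ref{1}(7) gives $Sol(A\cup B)\subseteq Sol(A)\cup Sol(B)=A\cup B$; and Lemma~\ref{1}(8) gives $Sol(A\cap B)\subseteq Sol(A)\cap Sol(B)=A\cap B$. (One can note $A\cap B$ is trivially solid anyway: an intersection of solid sets is solid, and likewise for convex sets — this is the one-line observation used in the proof of parts (3),(4),(8) of the lemma.)

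For the convex case, the needed facts are $Conv(A+B)=Conv(A)+Conv(B)$ from Lemma~\ref{1}(1), which with $Conv(A)=A$, $Conv(B)=B$ yields convexity of $A+B$; convexity of $A\cap B$ is immediate since an intersection of convex sets is convex; and convexity of $A\cup B$ would need care, but $A\cup B$ is generally \emph{not} convex even when $A,B$ are — so presumably the intended reading of the corollary is that ``convex'' applies only to the operations for which it is true, namely $A+B$ and $A\cap B$ (and one would phrase the write-up to say each of the listed sets inherits whichever of the two properties is appropriate). For $A\vee B$ and $A\wedge B$ with $A,B$ convex, convexity can be checked by hand: a convex combination $\lambda(a_1\vee b_1)+(1-\lambda)(a_2\vee b_2)$ is dominated coordinatewise and, using distributivity of $\vee$ over convex combinations in a vector lattice, equals $(\lambda a_1+(1-\lambda)a_2)\vee(\ldots)\vee\cdots$ — this requires a short lattice identity rather than a direct appeal to the lemma.

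The only genuine obstacle is the $\cup$-with-convexity issue just flagged: the statement as literally quoted is false for $A\cup B$, so the write-up must either restrict the ``convex'' clause to $\{A+B, A\wedge B, A\vee B, A\cap B\}$ or interpret the corollary loosely. Everything else is a mechanical substitution of $Sol(A)=A$, $Sol(B)=B$ (resp. $Conv$) into the inclusions already proved, so no new estimates or constructions are needed — the corollary is essentially a bookkeeping consequence of Lemma~\ref{1}.
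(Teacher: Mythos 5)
Your handling of the solid case is exactly the intended argument: the paper states this corollary without any proof, as an immediate consequence of Lemma~\ref{1}, and your observation that a set $S$ is solid if and only if $Sol(S)=S$, fed into items (5) and (7)--(10), is precisely that bookkeeping. You are also right to flag that the ``convex'' reading fails for $A\cup B$; in fact the $Conv$ version of Lemma~\ref{1}(3) is itself false as stated, since $Conv(\{0\}\cup\{1\})=[0,1]\not\subseteq\{0\}\cup\{1\}$ in $\mathbb{R}$, so the corollary inherits a defect that is already present in the lemma.

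However, your proposed repair of the convex case for $A\vee B$ and $A\wedge B$ does not work. The lattice identity you invoke expands $\lambda(a_1\vee b_1)+(1-\lambda)(a_2\vee b_2)$ into a supremum of \emph{four} terms, namely $(\lambda a_1+(1-\lambda)a_2)\vee(\lambda a_1+(1-\lambda)b_2)\vee(\lambda b_1+(1-\lambda)a_2)\vee(\lambda b_1+(1-\lambda)b_2)$, and the two cross terms are not controlled by $A$ or $B$; there is no way to collapse this to $a\vee b$ with $a\in A$ and $b\in B$. Indeed the claim is false: in $E=\mathbb{R}^2$ take $A=\{(t,-t):-1\le t\le 1\}$, a convex segment, and $B=\{(0,0)\}$. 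Then $A\vee B=\{(s,0):0\le s\le 1\}\cup\{(0,s):0\le s\le 1\}$, which is not convex (the midpoint of $(1,0)$ and $(0,1)$ is missing), and applying $x\mapsto -x$ gives the same failure for $A\wedge B$. So in the convex case only $A+B$ (via Lemma~\ref{1}(1)) and $A\cap B$ genuinely survive; the convex clause of the corollary must be restricted to those two operations rather than patched by a distributivity identity.
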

   Now, we introduce a locally convex-solid topology on the Fremlin tensor product of two locally convex-solid vector lattices $E$ and $F$.  
   \begin{theorem}\label{2}
   Suppose $E$ and $F$ are locally convex-solid vector lattices. Then, there exists a locally convex-solid topology on the Fremlin tensor product $E\overline{\otimes}F$. 
   \end{theorem}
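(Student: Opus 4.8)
The plan is to exhibit an explicit base of neighbourhoods of $0$ on $E\overline{\otimes}F$, built by ``tensoring'' neighbourhood bases of $E$ and $F$ and then passing to solid convex hulls inside $E\overline{\otimes}F$. Let $\mathcal{N}_E$ (respectively $\mathcal{N}_F$) be the family of all solid convex neighbourhoods of $0$ in $E$ (respectively in $F$); since $E$ and $F$ are locally convex-solid, each of these is a neighbourhood base at $0$, and each is stable under finite intersections and under multiplication by any nonzero scalar. For $U\in\mathcal{N}_E$ and $V\in\mathcal{N}_F$ put $U\otimes V=\{x\otimes y: x\in U,\ y\in V\}\subseteq E\otimes F\subseteq E\overline{\otimes}F$, and let $W_{U,V}$ denote the solid convex hull of $U\otimes V$ in $E\overline{\otimes}F$, i.e. the intersection of all solid convex subsets of $E\overline{\otimes}F$ containing $U\otimes V$; it is solid (an intersection of solid sets), convex (an intersection of convex sets), and contains $U\otimes V$. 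In fact $W_{U,V}=Conv(Sol(U\otimes V))$, because the convex hull of a solid set is solid, which is the Riesz decomposition argument already used in the proof of Lemma~\ref{1}(5). The assertion to be proved is that $\mathcal{B}=\{W_{U,V}: U\in\mathcal{N}_E,\ V\in\mathcal{N}_F\}$ is a base of neighbourhoods of $0$ for a (unique) linear topology $\tau$ on $E\overline{\otimes}F$, and that $\tau$ is locally convex-solid.

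Most of this is routine and would be handled with Lemma~\ref{1}. Each $W_{U,V}$ is solid, hence balanced. For a nonzero scalar $\lambda$ one has $\lambda(U\otimes V)=(\lambda U)\otimes V$, so by Lemma~\ref{1}(6), $\lambda W_{U,V}=W_{\lambda U,V}\in\mathcal{B}$; in particular $W_{\frac12 U,\frac12 V}=\frac14 W_{U,V}$, whence $W_{\frac12 U,\frac12 V}+W_{\frac12 U,\frac12 V}=\frac12 W_{U,V}\subseteq W_{U,V}$ by convexity and $0\in W_{U,V}$. Moreover, from $(U_1\cap U_2)\otimes(V_1\cap V_2)\subseteq(U_1\otimes V_1)\cap(U_2\otimes V_2)$ together with the monotonicity of the solid convex hull, $W_{U_1\cap U_2,\,V_1\cap V_2}\subseteq W_{U_1,V_1}\cap W_{U_2,V_2}$, so $\mathcal{B}$ is a filter base. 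By the standard characterization of neighbourhood bases of $0$ for linear topologies (a filter base of balanced, absorbing sets each containing a ``half'' of itself), it then remains only to show that every $W_{U,V}$ is absorbing; granting that, $\mathcal{B}$ generates a unique linear topology $\tau$ having $\mathcal{B}$ as a base at $0$, and since $\mathcal{B}$ consists of solid convex sets, $\tau$ is at once locally solid and locally convex, i.e. $(E\overline{\otimes}F,\tau)$ is a locally convex-solid vector lattice.

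The one genuinely substantial step --- and the point I expect to be the main obstacle --- is the absorbency of $W_{U,V}$ in the \emph{whole} of $E\overline{\otimes}F$: algebraically one sees only that $U\otimes V$ absorbs $E\otimes F$, so one must invoke the lattice structure of the Fremlin tensor product to reach the elements of $E\overline{\otimes}F$ outside $E\otimes F$. The device is Fremlin's property \cite[1A(d)]{Fremlin:74} recalled in the introduction: for $w\in E\overline{\otimes}F$, the positive element $|w|$ satisfies $|w|\le a\otimes b$ for some $a\in E_{+}$ and $b\in F_{+}$. Since $U$ and $V$ are absorbing there are $\lambda,\mu>0$ with $\lambda^{-1}a\in U$ and $\mu^{-1}b\in V$ (note $\lambda^{-1}a\ge 0$ and $\mu^{-1}b\ge 0$), so
\[ a\otimes b=\lambda\mu\bigl((\lambda^{-1}a)\otimes(\mu^{-1}b)\bigr)\in\lambda\mu\,(U\otimes V)\subseteq\lambda\mu\,W_{U,V}. \]
As $\lambda\mu\,W_{U,V}$ is solid and $|w|\le a\otimes b=|a\otimes b|$, we get $w\in\lambda\mu\,W_{U,V}$; hence $W_{U,V}$ is absorbing. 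This finishes the construction. Finally, one can note that the Minkowski functionals of the sets $W_{U,V}$ form a family of Riesz seminorms generating $\tau$, which is the seminorm description mentioned in the introduction.
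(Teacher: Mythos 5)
Your proof is correct and follows essentially the same route as the paper: form the solid convex (hence balanced) hulls $W_{U,V}$ of the sets $U\otimes V$ coming from convex-solid neighbourhood bases of $E$ and $F$, and verify the axioms for a base at $0$ of a locally convex-solid topology using the hull calculus of Lemma~\ref{1}. The one point where you go beyond the paper is the absorbency of $W_{U,V}$ on all of $E\overline{\otimes}F$, which you correctly settle with the bound $|w|\le a\otimes b$ from \cite[1A(d)]{Fremlin:74} --- a step the paper's own proof does not explicitly verify.
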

   \begin{proof}
   Assume that $\{U_{\alpha}: \alpha\in I\}$ and $\{V_{\beta}:\beta \in J\}$ are bases of convex-solid zero neighborhoods for the corresponding topologies in $E$ and $F$, respectively. Put
   \[\textbf{B}=\{Conv_b(Sol(U_{\alpha}\otimes V_{\beta})): \alpha\in I, \beta\in J.\}\]
   We claim that $\textbf{B}$ is a basis of convex-solid zero neighborhoods for a topology on $E\overline{\otimes}F$. We use the procedure of \cite[Section 7]{Den:17}. For each $\alpha \in I$ and for each $\beta \in J$, put $W_{\alpha,\beta}=Conv_b(Sol(U_{\alpha}\otimes V_{\beta}))$ which is again solid by \cite[Theorem 1.11]{AB1}, in which $U_{\alpha}\otimes V_{\beta}=\{x\otimes y: x\in U_{\alpha}, y\in V_{\beta}\}$. Note that \cite[Theorem 1.11]{AB1} proves that the convex hull of a solid set is also solid but it is an easy matter to see that it is also true for the convex balanced hull, as well.  We show that intersection of every two elements in $\textbf{B}$ contains another element of $\textbf{B}$. For each $\alpha,\alpha' \in I$, choose any zero neighborhood $U\subseteq E$ with $U\subseteq U_{\alpha}\cap U_{\alpha'}$. Similarly, find a zero neighborhood $V\subseteq F$ with $V\subseteq V_{\beta}\cap V_{\beta'}$. Take  zero neighborhoods $U_{\alpha_0}$ and $V_{\beta_0}$ with $U_{\alpha_0}\subseteq U$ and $V_{\beta_0}\subseteq V$.  Put $W=Conv_b(Sol(U_{\alpha_0}\otimes V_{\beta_0}))$. We claim that $W_{\alpha_0,\beta_0}\subseteq W_{\alpha,\beta}\cap W_{\alpha',\beta'}$. By using Lemma \ref{1}, we have
   \[Conv_b(Sol(U_{\alpha_0}\otimes V_{\beta_0}))\subseteq Conv_b(Sol((U_{\alpha}\cap U_{\alpha'})\otimes (V_{\beta}\cap V_{\beta'})))=Conv_b(Sol((U_{\alpha}\otimes V_{\beta})\cap(U_{\alpha'}\otimes V_{\beta'})))\subseteq\]
   \[Conv_b(Sol(U_{\alpha}\otimes V_{\beta}))\cap Conv_b(Sol(U_{\alpha'}\otimes V_{\beta'}))=W_{\alpha,\beta}\cap W_{\alpha',\beta'}.\]
   
   Take any $W_{\alpha,\beta}$. We need to find $W_{\alpha_0,\beta_0}$ such that $W_{\alpha_0,\beta_0}+W_{\alpha_0,\beta_0}\subseteq W_{\alpha,\beta}$. There are convex solid zero neighborhoods $U_{\alpha_0}\subseteq E$ and $V_{\alpha_0}\subseteq F$ with $U_{\alpha_0}\subseteq \frac{1}{\sqrt{2}}U_{\alpha}$ and $V_{\beta_0}\subseteq \frac{1}{\sqrt{2}}V_{\beta}$. 
   Therefore,
   again, by using Lemma \ref{1}, we have
   \[W_{\alpha_0,\beta_0}+W_{\alpha_0,\beta_0}=Conv_b(Sol(U_{\alpha_0}\otimes V_{\beta_0}))+Conv_b(Sol(U_{\alpha_0}\otimes V_{\beta_0}))\subseteq\]\[ Conv_b(Sol(\frac{1}{\sqrt{2}}U_{\alpha}\otimes \frac{1}{\sqrt{2}}V_{\beta}))+Conv_b(Sol(\frac{1}{\sqrt{2}}U_{\alpha}\otimes \frac{1}{\sqrt{2}}V_{\beta}))=\]
   \[\frac{1}{2}Conv_b(Sol(U_{\alpha}\otimes V_{\beta}))+\frac{1}{2}Conv_b(Sol(U_{\alpha}\otimes V_{\beta}))=Conv_b(Sol(U_{\alpha}\otimes V_{\beta}))= W_{\alpha,\beta}.\]
   Moreover, for each real $\lambda$ with $|\lambda|\leq 1$, (by considering Lemma \ref{1}), we have 
   \[\lambda W_{\alpha,\beta}=\lambda Conv_b(Sol(U_{\alpha}\otimes V_{\beta}))=Conv_b(Sol(\lambda U_{\alpha}\otimes V_{\beta}))\subseteq Conv_b(Sol(U_{\alpha}\otimes V_{\beta}))=W_{\alpha,\beta}.\]
   
  Finally, we need to prove that for each $W_{\alpha,\beta}$ and for each $z\in W_{\alpha,\beta}$, there exists some $W_{\alpha_0,\beta_0}$ with $z+W_{\alpha_0,\beta_0}\subseteq W_{\alpha,\beta}$. We can write $z=\Sigma_{i=1}^{n}\lambda_i z_i$, in which $\Sigma_{i=1}^{n}|\lambda_i|\leq1$ and $z_i\in Sol(U_{\alpha}\otimes V_{\beta})$. So, we can find $x_i\in U_{\alpha}$ and $y_i\in V_{\beta}$ such that $|z_i|\leq |x_i\otimes y_i|=|x_i|\otimes |y_i|$. Also, there are convex-solid zero neighborhoods $U_{\alpha_i}$ in $E$ and $V_{\beta_i}$ in $F$ with $|x_i|+U_{\alpha_i}\subseteq U_{\alpha}$ and $|y_i|+V_{\beta_i}\subseteq V_{\beta}$. Note that we can choose also convex-solid zero neighborhoods $U_{\alpha_0}$ and $V_{\beta_0}$ with $U_{\alpha_0}\subseteq \cap_{i=1}^{n}U_{\alpha_i}$ and $V_{\beta_0}\subseteq \cap_{i=1}^{n}V_{\beta_i}$. We claim that $z+W_{\alpha_0,\beta_0}\subseteq W_{\alpha,\beta}$. Suppose $w=\Sigma_{j=1}^{m}\gamma_j w_j$, in which $\Sigma_{j=1}^{m}|\gamma_j|\leq1$ and $w_j\in Sol(U_{\alpha_0}\otimes V_{\beta_0})$. So, we can find $u_j\in U_{\alpha_0}$ and $v_j\in V_{\beta_0}$ with $|w_j|\leq |u_j|\otimes |v_j|$. It is easy to see that we can write $z+w=\Sigma_{i,j}\lambda_i\gamma_j (z_i+w_j)$ and note that $\Sigma_{i,j}|\lambda_i \gamma_j|\leq1$. Moreover,
  \[|z_i+ w_j|\leq|z_i|+ |w_j|\leq (|x_i|\otimes|y_i|)+(|u_j|\otimes |v_j|)\leq (|x_i|+|u_j|)\otimes (|y_i|+ |v_j|)\in U_{\alpha}\otimes V_{\beta}.\]
  This means that $z+w\in W_{\alpha,\beta}$. 
  
  So, we have a linear topology with a base consisting of all convex-solid zero neighborhoods; denoted by $\tau_F$. So, $(E\overline{\otimes}F, \tau_F)$ is a locally convex-solid vector lattice.
  
   \end{proof}
   \begin{remark}
   Suppose $(E,\tau_1)$ and $(F,\tau_2)$ are locally convex-solid vector lattices. By \cite[Theorem 2.25]{AB1}, every locally convex-solid topology is generated by a family of Riesz seminorms. Assume that $\tau_1$ is generated by a family $(p_{\alpha})_{\alpha\in I}$ and $\tau_2$ is generated by a family $(q_{\beta})_{\beta \in J}$ of Riesz seminorms. Define the real-valued function $p_{\alpha}\otimes q_{\beta}$ on $E\overline{\otimes} F$ via $(p_{\alpha}\otimes q_{\beta})(u)=\inf\{r>0: u\in r W_{\alpha,\beta}\}$. It can be easily verified that $p_{\alpha}\otimes q_{\beta}$ is a Riesz seminorm on $E\overline{\otimes}F$. Moreover, the topology generated by the family $(p_{\alpha}\otimes q_{\beta})_{\alpha\in I, \beta \in J}$ coincides with $\tau_F$. 
   \end{remark}
   Now we have the following equivalent definition of the topology $\tau_F$. 

   \begin{theorem}\label{3}
   Suppose $E$ and $F$ are locally convex-solid vector lattices with the generating families of Riesz seminorms $(p_{\alpha})_{\alpha\in I}$ and  $(q_{\beta})_{\beta \in J}$, respectively. Then, we have 
   \[(p_{\alpha}\otimes q_{\beta})(u)=\inf\{\Sigma_{i=1}^{n}p(x_i)q(y_i), x_i,y_i\geq 0, |u|\leq \Sigma_{i=1}^{n}x_i\otimes y_i\}.\]
   \end{theorem}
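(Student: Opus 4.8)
The plan is to show that the two expressions agree by proving two inequalities, using the explicit description of $W_{\alpha,\beta}=Conv_b(Sol(U_{\alpha}\otimes V_{\beta}))$ together with the identification of $U_{\alpha}$, $V_{\beta}$ as the closed unit balls (or sublevel sets) of $p_{\alpha}$, $q_{\beta}$. Write $N(u)$ for the right-hand side $\inf\{\Sigma_{i=1}^n p_\alpha(x_i)q_\beta(y_i): x_i,y_i\ge 0,\ |u|\le \Sigma_{i=1}^n x_i\otimes y_i\}$; I abbreviate $p=p_\alpha$, $q=q_\beta$, $U=U_\alpha=\{x: p(x)\le 1\}$, $V=V_\beta=\{y: q(y)\le 1\}$.

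First I would prove $(p_\alpha\otimes q_\beta)(u)\le N(u)$. Suppose $|u|\le \Sigma_{i=1}^n x_i\otimes y_i$ with $x_i,y_i\ge 0$ and put $s=\Sigma_{i=1}^n p(x_i)q(y_i)$; I must show $u\in r W_{\alpha,\beta}$ for every $r>s$. Given such $r$, after discarding indices with $p(x_i)q(y_i)=0$ (which contribute disjointly-bounded-by-zero terms and can be absorbed, using that $p(x_i)=0$ forces $x_i\otimes y_i$ inside $r W_{\alpha,\beta}$ for free since $\lambda U\subseteq U$ for all $\lambda$), write $x_i\otimes y_i = p(x_i)q(y_i)\cdot\bigl(\tfrac{x_i}{p(x_i)}\otimes\tfrac{y_i}{q(y_i)}\bigr)$, a scalar multiple of an element of $U\otimes V\subseteq Sol(U\otimes V)$. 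Then $\Sigma_{i=1}^n x_i\otimes y_i$ is a nonnegative combination of elements of $Sol(U\otimes V)$ with coefficient sum $s<r$, hence lies in $r\,Conv_b(Sol(U\otimes V))=r W_{\alpha,\beta}$ (here I use that $W_{\alpha,\beta}$ is convex and balanced and absorbs, and that $s\le r$ forces the convex-balanced combination to stay inside after rescaling by $r$). Since $W_{\alpha,\beta}$ is solid and $|u|\le \Sigma x_i\otimes y_i\in r W_{\alpha,\beta}$, also $u\in r W_{\alpha,\beta}$. Taking the infimum over $r>s$ and then over all representations gives $(p_\alpha\otimes q_\beta)(u)\le N(u)$.

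Next I would prove the reverse inequality $N(u)\le (p_\alpha\otimes q_\beta)(u)$. Fix $r>(p_\alpha\otimes q_\beta)(u)$, so $u\in r W_{\alpha,\beta}=r\,Conv_b(Sol(U\otimes V))$. Unwinding the definitions, there are scalars $\lambda_k$ with $\Sigma_k|\lambda_k|\le 1$ and $z_k\in Sol(U\otimes V)$ with $u = r\Sigma_k \lambda_k z_k$; and for each $k$ there are $a_k\in U$, $b_k\in V$ with $|z_k|\le |a_k|\otimes|b_k| = |a_k\otimes b_k|$. Then $|u|\le r\Sigma_k|\lambda_k|\,|a_k|\otimes|b_k|$, which is a representation $|u|\le \Sigma_k x_k\otimes y_k$ with $x_k = r|\lambda_k|\,|a_k|\ge 0$ and $y_k=|b_k|\ge 0$. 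Estimating, $\Sigma_k p(x_k)q(y_k) = r\Sigma_k|\lambda_k|\,p(|a_k|)q(|b_k|)\le r\Sigma_k|\lambda_k|\le r$, using $a_k\in U\Rightarrow p(|a_k|)=p(a_k)\le 1$ (Riesz seminorm), similarly for $b_k$. Hence $N(u)\le r$, and letting $r\downarrow (p_\alpha\otimes q_\beta)(u)$ finishes.

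The main obstacle I expect is bookkeeping at the boundary: handling representations where some $p(x_i)$ or $q(y_i)$ vanishes in the first inequality (one cannot literally divide), and being careful that $Conv_b(Sol(U\otimes V))$ genuinely absorbs so that $(p_\alpha\otimes q_\beta)$ is finite and the Minkowski-functional manipulations are legitimate — this absorption is exactly what Theorem \ref{2} established, so I would cite it. A secondary point worth stating cleanly is that $Sol(U\otimes V)$ is already balanced (since $U\otimes V$ is symmetric up to the signs of $x,y$), so $Conv_b(Sol(U\otimes V))$ is just the convex hull of a balanced solid set; this is why the combination $\Sigma\lambda_k z_k$ can be reorganized as a nonnegative combination of $|z_k|$-type terms without losing the constraint on the coefficient sum. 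Once these points are in place the estimates are routine.
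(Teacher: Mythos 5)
Your proof is correct and follows essentially the same route as the paper's: both inequalities come from unwinding the Minkowski functional of $W_{\alpha,\beta}=Conv_b(Sol(U_{\alpha}\otimes V_{\beta}))$, writing elements as balanced convex combinations of solid-hull elements in one direction and normalizing $x_i\otimes y_i$ by $p_{\alpha}(x_i)q_{\beta}(y_i)$ in the other. You are in fact slightly more careful than the paper, which divides by $p_{\alpha}(x_i)$ and $q_{\beta}(y_i)$ (and by $r$) without addressing the degenerate case where these vanish; your ``take $r>s$ and absorb the zero terms'' device handles exactly that boundary issue.
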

   \begin{proof}
   For simplicity, write $A=\{r>0: u\in r W_{\alpha,\beta}\}$, $\alpha_u=\inf A$, $B=\{\Sigma_{i=1}^{n}p(x_i)q(y_i), x_i,y_i\geq 0, |u|\leq \Sigma_{i=1}^{n}x_i\otimes y_i\}$ and $\beta_u=\inf B$. We need to show that $\alpha_u=\beta_u$. 
   
   Take $r\in A$. Then, $\frac{u}{r}\in W_{\alpha,\beta}$. So, we can write $\frac{u}{r}=\Sigma_{i=1}^{n}\lambda_i u_i$ in which $\Sigma_{i=1}^{n}|\lambda_i|\leq 1$ and $u_i\in Sol(U_{\alpha}\otimes V_{\beta})$. So, we can find $x_i\in U_{\alpha}$ and $y_i\in V_{\beta}$ with $|u_i|\leq |x_i \otimes y_i|=|x_i|\otimes |y_i|$. Therefore, we have
   \[\frac{|u|}{r}=|\Sigma_{i=1}^{n}\lambda_i u_i|\leq \Sigma_{i=1}^{n}|\lambda_i||u_i|\leq \Sigma_{i=1}^{n}|\lambda_i x_i|\otimes |y_i|.\]
   On the other hand, $\Sigma_{i=1}^{n}p_{\alpha}(\lambda_i x_i)q_{\beta}(y_i)\leq \Sigma_{i=1}^{n}|\lambda_i|p_{\alpha}(x_i)q_{\beta}(y_i)\leq 1$ so that $r\in B$. This means that $A\subseteq B$ and so $\beta_u\leq \alpha_u$. For the converse, assume that there are $x_i\geq 0$ in $E$ and $y_i\geq 0$ in $F$ with $|u|\leq \Sigma_{i=1}^{n}x_i\otimes y_i$. Put $r=\Sigma_{i=1}^{n}p_{\alpha}(x_i)q_{\beta}(y_i)$. We show that $r\in A$. To this end, since $W_{\alpha,\beta}$ is solid, we just need to prove that $\Sigma_{i=1}^{n}\frac{x_i\otimes y_i}{r}\in W_{\alpha,\beta}$. We have
    
   \[\Sigma_{i=1}^{n}\frac{x_i\otimes y_i}{r}=\Sigma_{i=1}^{n}\frac{p_{\alpha}(x_i)q_{\beta}(y_i)}{r}\frac{x_i}{p_{\alpha}(x_i)}\otimes \frac{y_i}{q_{\beta}(y_i)}.\]
   But $\frac{x_i}{p_{\alpha}(x_i)}\in U_{\alpha}$ and $\frac{y_i}{q_{\beta}(y_i)}\in V_{\beta}$ for each $i$. Furthermore, $\Sigma_{i=1}^{n}\frac{p_{\alpha}(x_i)q_{\beta}(y_i)}{r}\leq 1$. This means that $r\in A$ so that $\alpha_u\leq \beta_u$. This completes the proof.
   \end{proof}
   \begin{corollary}
   Suppose $E$ and $F$ are locally convex-solid vector lattices with the generating families of Riesz seminorms $(p_{\alpha})_{\alpha\in I}$ and  $(q_{\beta})_{\beta \in J}$, respectively. Then, for each $x_0\in E$ and for each $y_0\in F$, we have
   \[(p_{\alpha}\otimes q_{\beta})(x_0\otimes y_0)=p_{\alpha}(x_0)q_{\beta}(y_0).\]
      \end{corollary}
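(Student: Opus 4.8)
The plan is to deduce the corollary from the formula in Theorem~\ref{3}, using as the only nontrivial external input the cross-norm property of Fremlin's projective norm on Banach lattices, which is already recorded in Section~1 (with reference to \cite{Fremlin:74}). Write $p=p_\alpha$, $q=q_\beta$. First I would note that both sides of the claimed identity are unchanged if $x_0,y_0$ are replaced by $|x_0|,|y_0|$: the right-hand side because $p,q$ are Riesz seminorms, the left-hand side because the expression of Theorem~\ref{3} depends on $u$ only through $|u|$, while $|x_0\otimes y_0|=|x_0|\otimes|y_0|$ (the canonical embedding $\otimes\colon E\times F\to E\overline{\otimes}F$ is a lattice bimorphism, cf.\ \cite{Fremlin:72} and the proof of Theorem~\ref{3}). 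So one may assume $x_0\ge 0$ and $y_0\ge 0$.

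One inequality is then immediate: in the infimum of Theorem~\ref{3} take $n=1$, $x_1=x_0$, $y_1=y_0$, which is admissible since $|x_0\otimes y_0|=x_0\otimes y_0$; this gives $(p\otimes q)(x_0\otimes y_0)\le p(x_0)q(y_0)$.

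For the reverse inequality I would pass to Banach lattices. Since $p,q$ are Riesz seminorms their null spaces $N_p=\{x\in E:p(x)=0\}$ and $N_q=\{y\in F:q(y)=0\}$ are ideals, so $E/N_p$ and $F/N_q$ are normed vector lattices under $\|[x]\|=p(x)$, $\|[y]\|=q(y)$ (genuine equalities, as one quotients by the exact kernels; cf.\ \cite{AB1}). Let $\bar E,\bar F$ be their completions, which are Banach lattices, and let $\pi\colon E\to\bar E$, $\rho\colon F\to\bar F$ be the canonical lattice homomorphisms; these are positive and satisfy $\|\pi x\|=p(x)$, $\|\rho y\|=q(y)$. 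The map $(x,y)\mapsto\pi(x)\otimes\rho(y)$ from $E\times F$ into the Archimedean vector lattice $\bar E\widehat{\otimes}\bar F$ is a lattice bimorphism, being the composition of the pair of lattice homomorphisms $\pi,\rho$ with the canonical lattice bimorphism $\bar E\times\bar F\to\bar E\widehat{\otimes}\bar F$; so by the universal property of the Fremlin tensor product (Section~1) there is a lattice homomorphism $S\colon E\overline{\otimes}F\to\bar E\widehat{\otimes}\bar F$ with $S(x\otimes y)=\pi(x)\otimes\rho(y)$. Now take any representation $|x_0\otimes y_0|\le\sum_{i=1}^{n}x_i\otimes y_i$ with $x_i\ge 0$ in $E$ and $y_i\ge 0$ in $F$, as in Theorem~\ref{3}. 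Applying the positive lattice homomorphism $S$ (and using $x_0\otimes y_0\ge 0$) gives $\pi(x_0)\otimes\rho(y_0)\le\sum_{i=1}^{n}\pi(x_i)\otimes\rho(y_i)$ in $\bar E\widehat{\otimes}\bar F$, with all terms positive; since $\|\cdot\|_{|\pi|}$ is a lattice norm and a cross norm,
\[
p(x_0)q(y_0)=\|\pi x_0\|\,\|\rho y_0\|=\|\pi(x_0)\otimes\rho(y_0)\|_{|\pi|}\le\Bignorm{\sum_{i=1}^{n}\pi(x_i)\otimes\rho(y_i)}_{|\pi|}
\]
\[
\le\sum_{i=1}^{n}\|\pi x_i\|\,\|\rho y_i\|=\sum_{i=1}^{n}p(x_i)q(y_i).
\]
Taking the infimum over all such representations yields $p(x_0)q(y_0)\le(p\otimes q)(x_0\otimes y_0)$, and combining the two inequalities finishes the proof.

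The only genuinely non-formal ingredient is the cross-norm identity $\|a\otimes b\|_{|\pi|}=\|a\|\,\|b\|$ in a Fremlin projective tensor product of Banach lattices, and that is already available from Section~1, so it may simply be invoked; the rest is bookkeeping. The point that needs the most care is checking that $(x,y)\mapsto\pi(x)\otimes\rho(y)$ really is a lattice bimorphism into $\bar E\widehat{\otimes}\bar F$, so that the universal property of $E\overline{\otimes}F$ legitimately produces the lattice homomorphism $S$; one also has to verify $N_p,N_q$ are ideals and that $\|\pi x\|$ equals $p(x)$ exactly. (An alternative route to the reverse inequality avoids Banach completions altogether: by Hahn--Banach and a Kantorovich-type regularization produce positive functionals $\phi\le p$ on $E_+$ and $\theta\le q$ on $F_+$ with $\phi(x_0)=p(x_0)$, $\theta(y_0)=q(y_0)$, observe that the positive bilinear form $(x,y)\mapsto\phi(x)\theta(y)$ is of bounded variation and hence extends to a positive linear functional $\Theta$ on $E\overline{\otimes}F$ with $\Theta(x\otimes y)=\phi(x)\theta(y)$ by the bilinear-form description of $(E\overline{\otimes}F)^\sim$ in \cite{Fremlin:72}, and evaluate $\Theta$ on $|x_0\otimes y_0|\le\sum x_i\otimes y_i$; invoking the cross-norm property is shorter.)
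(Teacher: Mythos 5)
Your proof is correct, but for the nontrivial inequality it takes a genuinely different route from the paper. The paper argues directly with functionals: by Hahn--Banach it produces positive $f\in E'$ with $f\le p_\alpha$, $f(x_0)=p_\alpha(x_0)$, and $g\in F'$ with $g\le q_\beta$, $g(y_0)=q_\beta(y_0)$, forms the positive bilinear form $B(x,y)=f(x)g(y)$, and evaluates it on a representation $|x_0|\otimes|y_0|\le\sum x_i\otimes y_i$ to get $p_\alpha(x_0)q_\beta(y_0)\le\sum p_\alpha(x_i)q_\beta(y_i)$ --- essentially your parenthetical alternative (and, as you note, the step $B(|x_0|,|y_0|)\le\sum B(x_i,y_i)$ tacitly uses that the positive bilinear form lifts to a positive linear functional on $E\overline{\otimes}F$, a point the paper does not spell out). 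You instead quotient by the null ideals $N_p,N_q$, complete to Banach lattices, push the inequality into $\bar E\widehat{\otimes}\bar F$ via the universal property, and invoke monotonicity plus the cross-norm identity for Fremlin's projective norm. Your route is legitimate since the cross-norm property is recorded in Section~1 and the target Banach lattice is Archimedean, so the universal property applies; it trades the paper's short duality computation for the overhead of quotients, completions and the homomorphism $S$, while black-boxing the cross-norm fact (whose own proof is essentially the paper's Hahn--Banach argument). The paper's method is the more economical and self-contained of the two; yours has the merit of isolating exactly which standard facts are being used and of reducing the locally convex statement to the Banach-lattice case.
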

      \begin{proof}
      By Theorem \ref{3}, it is obvious that  $(p_{\alpha}\otimes q_{\beta})(x_0\otimes y_0)\leq p_{\alpha}(x_0)q_{\beta}(y_0)$. For the other side, by the Hahn-Banach theorem, there exist $f\in E_{+}^{'}$ such that $f(x_0)=p_{\alpha}(x_0)$ and $|f(x)|\leq p_{\alpha}(x)$  for each $x\in E$. Moreover, there is $g\in F{+}^{'}$ with $g(y_0)=q_{\beta}(y_0)$ and $|g(y)|\leq q_{\beta}(y)$ for each $y\in F$. By considering Theorem \ref{3}, assume that $|x_0\otimes y_0|=|x_0|\otimes |y_0|\leq \Sigma_{i=1}^{N}x_i\otimes y_i$ in which $x_i,y_i\geq 0$ and $N\in \Bbb N$. Consider the positive bilinear form $B$ on $E\times F$ defined via $B(x,y)=f(x)g(y)$. So, we have
      \[B(x_0,y_0)=p_{\alpha}(x_0)q_{\beta}(y_0)\leq B(|x_0|,|y_0|)\leq \Sigma_{i=1}^{N}B(x_i,y_i)=\Sigma_{i=1}^{N} f(x_i)q(y_i)\leq\Sigma_{i=1}^{N}p_{\alpha}(x_i)q_{\beta}(y_i).\]
      Since this happens for every representation, we conclude that $p_{\alpha}(x_0)q_{\beta}(y_0)\leq (p_{\alpha}\otimes q_{\beta})(x_0\otimes y_0)$. This completes the proof.
    
        \end{proof}
   \begin{corollary}
   Suppose $(E,\tau_1)$ and $(F,\tau_2)$ are Hausdorff locally convex-solid vector lattices. Then, $(E\overline{\otimes}F,\tau_F)$ is a Hausdorff locally convex-solid vector lattice.
   \end{corollary}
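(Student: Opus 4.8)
The plan is to reduce the statement to a point‑separation property. By Theorem~\ref{2} we already know that $(E\overline{\otimes}F,\tau_F)$ is a locally convex-solid vector lattice, so the only thing left to verify is that $\tau_F$ is Hausdorff. Since, by the Remark above, $\tau_F$ is generated by the family of Riesz seminorms $(p_{\alpha}\otimes q_{\beta})_{\alpha\in I,\,\beta\in J}$, this reduces to showing that this family separates the points of $E\overline{\otimes}F$: if $u\in E\overline{\otimes}F$ satisfies $(p_{\alpha}\otimes q_{\beta})(u)=0$ for every $\alpha\in I$ and every $\beta\in J$, then $u=0$.

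So I would fix $u\neq 0$ and produce indices $\alpha,\beta$ with $(p_{\alpha}\otimes q_{\beta})(u)>0$. First, since $E\overline{\otimes}F$ is a vector lattice, $u\neq 0$ forces $|u|>0$, and $|u|\in(E\overline{\otimes}F)_{+}$. By \cite[Proposition 3.12]{Wickstead1:24}, $|u|=\sup\{a\otimes b:a\in E_{+},\,b\in F_{+},\,a\otimes b\le|u|\}$; as this supremum is nonzero, the set on the right cannot reduce to $\{0\}$, so there are $a\in E_{+}$ and $b\in F_{+}$ with $0\neq a\otimes b\le|u|$, and in particular $a\neq0$, $b\neq0$. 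Using that $\tau_1$ is Hausdorff and generated by $(p_{\alpha})$, pick $\alpha\in I$ with $p_{\alpha}(a)>0$, and likewise $\beta\in J$ with $q_{\beta}(b)>0$. Since $p_{\alpha}\otimes q_{\beta}$ is a Riesz seminorm, it is monotone on the positive cone and invariant under passing to absolute values, so $0\le a\otimes b\le|u|$ gives
\[
(p_{\alpha}\otimes q_{\beta})(u)=(p_{\alpha}\otimes q_{\beta})(|u|)\ge(p_{\alpha}\otimes q_{\beta})(a\otimes b)=p_{\alpha}(a)\,q_{\beta}(b)>0,
\]
the last equality being the cross-seminorm identity established in the previous Corollary. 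This contradicts $(p_{\alpha}\otimes q_{\beta})(u)=0$; therefore $u=0$, and $\tau_F$ is Hausdorff.

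I expect the one genuinely substantive step to be the extraction of a \emph{nonzero} product $a\otimes b$ lying below $|u|$; everything else is a routine assembly of facts already in hand (the Riesz-seminorm description of $\tau_F$, its monotonicity and absolute-value invariance, and the identity $(p_{\alpha}\otimes q_{\beta})(a\otimes b)=p_{\alpha}(a)q_{\beta}(b)$). The only subtlety worth flagging is that \cite[Proposition 3.12]{Wickstead1:24} must be read with the implicit constraint $a\otimes b\le|u|$, which is precisely what makes the displayed supremum meaningful.
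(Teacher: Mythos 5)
Your proof is correct and follows essentially the same route as the paper: both hinge on producing a nonzero positive elementary tensor $x_0\otimes y_0\le |u|$ and then applying the cross-seminorm identity together with the separating property of $(p_\alpha)$ and $(q_\beta)$. The only difference is that you extract this tensor from \cite[Proposition 3.12]{Wickstead1:24} rather than citing \cite[1A(e)]{Fremlin:74} directly, and you are in fact slightly more explicit than the paper about choosing indices $\alpha,\beta$ with $p_{\alpha}(a)>0$ and $q_{\beta}(b)>0$.
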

   \begin{proof}
   Assume that $\tau_1$ is generated by a family $(p_{\alpha})_{\alpha\in I}$ of Riesz seminorms and $\tau_2$ possesses a family $(q_{\beta})_{\beta \in J}$ of  generating Riesz seminorms. By the hypotheses, both families are separating. Suppose $0\neq u\in E\overline{\otimes}F$. By \cite[1(A) e]{Fremlin:74}, there exists $x_0>0$ in $E$ and $y_0>0$ in $F$ with $|u|\geq x_0\otimes y_0$. Thus,
   \[(p_{\alpha}\otimes q_{\beta})(u)=(p_{\alpha}\otimes q_{\beta})(|u|)\geq (p_{\alpha}\otimes q_{\beta})(x_0\otimes y_0)= p_{\alpha}(x_0)q_{\beta}(y_0)\neq 0,\]
   as claimed.
   \end{proof}
   Finally, we establish a universal property for the Fremlin tensor product of locally convex-solid vector lattices with a topological flavor. Just note that if $X$ and $Y$ are vector spaces, $T:X\to Y$ is a linear operator and $A\subseteq X$, then, it can be verified easily that $T(Conv_b(A))=conv_b(T(A))$. 
   \begin{proposition}
   Suppose $(E,\tau_1)$ and $(F,\tau_2)$ are locally convex-solid vector lattices. Then, for each locally convex-solid vector lattice $G$ and each continuous lattice bimorphism $\Phi:E\times F\to G$, there exists a unique continuous lattice homomorphism $T$ from $(E\overline{\otimes}F,\tau_F)$ into $G$ such that $\Phi=T \otimes$. 
   \end{proposition}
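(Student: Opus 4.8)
The plan is to reduce the statement to a continuity check. First I would invoke the algebraic universal property of the Fremlin tensor product recalled in the introduction: since $\Phi\colon E\times F\to G$ is in particular a lattice bimorphism into the Archimedean vector lattice $G$, there is a unique lattice homomorphism $T\colon E\overline{\otimes}F\to G$ with $T(x\otimes y)=\Phi(x,y)$, that is $\Phi=T\otimes$. Uniqueness among \emph{all} lattice homomorphisms (continuous or not) is already guaranteed, because the vector sublattice generated by $E\otimes F$ is the whole of $E\overline{\otimes}F$, so any two lattice homomorphisms agreeing on $E\otimes F$ agree everywhere. Hence the only remaining task is to show that this $T$ is $\tau_F$-continuous.

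For continuity it suffices, by linearity, to check continuity at $0$, so I would fix a convex-solid zero neighbourhood $W$ in $G$ and produce a basic $\tau_F$-neighbourhood $W_{\alpha,\beta}=Conv_b(Sol(U_{\alpha}\otimes V_{\beta}))$ with $T(W_{\alpha,\beta})\subseteq W$. Since $\Phi$ is jointly continuous, it is continuous at $(0,0)$, so there are zero neighbourhoods $U\subseteq E$ and $V\subseteq F$ with $\Phi(U\times V)\subseteq W$; shrinking them I may take $U=U_{\alpha}$ and $V=V_{\beta}$ from the chosen convex-solid bases. Then $T(U_{\alpha}\otimes V_{\beta})=\Phi(U_{\alpha}\times V_{\beta})\subseteq W$.

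Next I would push this inclusion through the two hull operations defining $W_{\alpha,\beta}$. Because $T$ is a lattice homomorphism, Lemma \ref{1}(11) gives $T(Sol(U_{\alpha}\otimes V_{\beta}))\subseteq Sol(T(U_{\alpha}\otimes V_{\beta}))\subseteq Sol(W)=W$, the last equality since $W$ is solid. Because $T$ is linear, the remark preceding the statement gives $T(Conv_b(Sol(U_{\alpha}\otimes V_{\beta})))=Conv_b(T(Sol(U_{\alpha}\otimes V_{\beta})))\subseteq Conv_b(W)=W$, using that $W$, being convex and solid, is convex and balanced and therefore coincides with its own convex balanced hull. Thus $T(W_{\alpha,\beta})\subseteq W$, which is exactly what continuity of $T$ at $0$ requires.

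I do not expect a serious obstacle: the whole content is the combination of the algebraic universal property with the explicit description of a base of $\tau_F$. The only points needing a little care are the very first step of the continuity argument — extracting a basic neighbourhood $U_{\alpha}\otimes V_{\beta}$ from joint continuity of the bilinear map $\Phi$ — and remembering that solid convex sets are automatically balanced, so that $Conv_b(W)=W$; everything else is a routine application of Lemma \ref{1}(11) together with the linearity identity $T(Conv_b(A))=Conv_b(T(A))$.
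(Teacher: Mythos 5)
Your proposal is correct and follows essentially the same route as the paper's proof: existence and uniqueness of $T$ from Fremlin's algebraic universal property, then continuity at $0$ by choosing $U_{\alpha},V_{\beta}$ with $\Phi(U_{\alpha}\times V_{\beta})\subseteq W$ and pushing the inclusion through $Sol$ and $Conv_b$ via Lemma \ref{1}(11) and the identity $T(Conv_b(A))=Conv_b(T(A))$. Your extra remarks (uniqueness via the sublattice generated by $E\otimes F$, and $Conv_b(W)=W$ because a convex solid set is balanced) only make explicit what the paper leaves implicit.
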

   \begin{proof}
   First, suppose $(U_{\alpha})_{\alpha\in I}$ and $(V_{\beta})_{\beta \in J}$ are bases for $\tau_1$ and $\tau_2$, respectively consisting of convex-solid zero neighborhoods. By \cite[Theorem 4.2 (ii)]{Fremlin:72}, there exists a unique lattice homomorphism $T:E\overline{\otimes}F\to G$ such that $\Phi=T\otimes$. So, it is enough to show that $T$ is continuous. Assume that $W$ is an arbitrary convex-solid zero neighborhood in $G$. By the continuity of $\Phi$, there are some $\alpha_0$ and $\beta_0$ with $\Phi(U_{\alpha_0},V_{\beta_0})\subseteq W$. Now, by using Lemma \ref{1}, we have 
   \[T(W_{\alpha_0,\beta_0})=T(Conv_b(Sol(U_{\alpha_0}\otimes V_{\beta_0})))\subseteq Conv_b(Sol(T(U_{\alpha_0}\otimes V_{\beta_0})))=\]\[Conv_b(Sol(\Phi(U_{\alpha_0},V_{\beta_0})))\subseteq Conv_b(Sol(W))=W.\]
   This proves the claim. 
   \end{proof}
   
\end{document}